\documentclass[11pt,a4paper,reqno]{amsart}
\usepackage{amsthm}
\usepackage{amsmath}
\usepackage{amssymb}
\usepackage{graphicx,color}

\usepackage[font=small]{caption}

\usepackage[shortlabels]{enumitem}

\makeatletter
\theoremstyle{plain}
\newtheorem{thm}{Theorem}
  \theoremstyle{definition}
  
  \theoremstyle{remark}
  \newtheorem{rem}[thm]{Remark}
  \theoremstyle{plain}
  
  \theoremstyle{plain}
  
  \theoremstyle{plain}
  \newtheorem{cor}[thm]{Corollary}
 \theoremstyle{definition}
  
  \theoremstyle{remark}
  \newtheorem*{rem*}{Remark}

  \theoremstyle{definition}

\usepackage{amsfonts}
\usepackage{mathrsfs}

\addtolength{\textwidth}{4em}
\addtolength{\hoffset}{-2em}
\addtolength{\textheight}{10ex}
\addtolength{\voffset}{-6ex}

\marginparwidth=3cm



\newcommand{\N}{\mathbb{N}}
\newcommand{\R}{{\mathbb{R}}}
\newcommand{\C}{{\mathbb{C}}}
\newcommand{\Z}{{\mathbb{Z}}}
\newcommand{\ii}{{\rm i}}

\newcommand{\diag}{\mathop\mathrm{diag}\nolimits}

\renewcommand{\Re}{\mathop\mathrm{Re}\nolimits}
\renewcommand{\Im}{\mathop\mathrm{Im}\nolimits}

\makeatother

\begin{document}

\title[]{The Hurwitz-type theorem for the regular Coulomb wave function via Hankel determinants}

\author[\'A. Baricz]{\'Arp\'ad Baricz}

\address{Department of Economics, Babe\c{s}-Bolyai University, Cluj-Napoca, Romania}
\address{Institute of Applied Mathematics, \'Obuda University, Budapest, Hungary}
\email{bariczocsi@yahoo.com}

\author[F. \v{S}tampach]{Franti\v{s}ek \v{S}tampach}

\address{Department of Mathematics, Stockholm University, Stockholm, Sweden}
\address{Department of Applied Mathematics, Faculty of Information Technology, Czech Technical University in~Prague, 
	Th{\' a}kurova~9, 160~00 Praha, Czech Republic}
\email{stampfra@fit.cvut.cz}

\keywords{Hankel determinant, Coulomb wave function, Bessel function, Rayleigh function}

\subjclass[2010]{15A15, 33C15, 33C10}

\begin{abstract}
We derive a closed formula for the determinant of the Hankel matrix whose entries are given by sums of negative powers of the zeros of the regular Coulomb wave function.
This new identity applied together with results of Grommer and Chebotarev allows us to prove a Hurwitz-type theorem about the zeros of the regular 
Coulomb wave function. As a particular case, we obtain a new proof of the classical Hurwitz's theorem from the theory of Bessel functions that is based on algebraic 
arguments. In addition, several Hankel determinants with entries given by the Rayleigh function and Bernoulli numbers are also evaluated.
\end{abstract}

\maketitle

\section{Introduction}

The Bessel functions as well as the Coulomb wave functions belong to the very classical special functions
that appear frequently at various places in mathematics and physics. 
Since the regular Coulomb wave function represents a generalization of the Bessel function of the first kind,
its properties are often reminiscent to the respective properties of the Bessel function. On the other hand, 
not all methods applicable to Bessel functions admit a straightforward generalization to the case of Coulomb wave functions.

A significant part of the research on Bessel functions and their generalizations is devoted to the study of their zeros. 
Naturally, methods of mathematical, in particular, complex analysis turn out to be the very well suited techniques in 
the analysis of the zeros that prove their usefulness many times during the last century. However, one of the main aim of this article 
is to stress the importance of linear algebra in the analysis of zeros of entire functions. By using primarily linear algebraic techniques, 
we prove a theorem on the reality and the exact number of possible complex zeros of the regular Coulomb wave function. The proof consists 
of two main ingredients. First, with the aid of certain properties of a particularly chosen family of orthogonal polynomials which is intimately 
related to the Coulomb wave function and was studied in~\cite{stampachstovicek_jmaa14}, we evaluate the determinant of the Hankel matrix whose 
entries are given by sums of negative powers of the zeros. Second, we combine the formula for the Hankel determinant with general results of 
Grommer~\cite{grommer_14} and Chebotarev~\cite{chebotarev_ma28}, which straightforwardly yields the desired goal.

The obtained result generalizes the well-known theorem of Hurwitz about the zeros of the Bessel function of the first kind; see Theorem~\ref{thm:hurwitz} below 
for its formulation. Consequently, the present approach provides an alternative proof of this classical result, which is rather algebraic and simple.
Let us remark that the original proof from Hurwitz~\cite{hurwitz_ma88}, based on the connection between Lommel polynomials and Bessel functions, 
need not be easy to read nowadays since the modern terms were not introduced in his time. In addition, the original proof contained some gaps that were corrected 
later by Watson~\cite{watson_plms20}. Nonetheless, the Hurwitz's result was very influential and many other proofs were found; see, for example, the articles of
Hilb~\cite{hilb_mz22}, Obreschkoff~\cite{obreschkoff_jdmv29}, P{\' o}lya~\cite{polya_jdmv29}, Hille and Szeg\H{o}~\cite{hilleszego_bams43}, Peyerimhoff~\cite{peyerimhoff_mmj13},
Runckel~\cite{runckel_tams69}, and Ki and Kim~\cite{kikim_dmj00}. 

Let us also point out that the methods developed in the papers cited above seem not to be readily applicable to the case of the Coulomb wave function treated here.
A reason for this can be that, unlike Lommel polynomials, no explicit expression for the coefficients of the orthogonal polynomials associated with the Coulomb 
wave function is known. Rather than that, only a recurrence relation for these coefficients is available, 
see~\cite[Prop.~10]{stampachstovicek_jmaa14}.

The main results of this paper are Theorems~\ref{thm:det_hankel_coulomb} and~\ref{thm:coulomb_zeros}, and the paper is organized as follows. 
In Section~\ref{sec:coulomb_func}, we briefly recall necessary definitions and basic properties of the regular Coulomb wave 
function and an associated spectral zeta function which, in a special case, simplifies to the well-known Rayleigh function. 

In Section~\ref{sec:hankel_det}, we prove a closed formula for the determinant of the Hankel matrix whose entries are 
given by the zeta function associated with the regular Coulomb wave function. As a corollary, we obtain formulas for 
determinants of two Hankel matrices with entries given by the Rayleigh function. These two Hankel determinants
can be viewed as two particular instances of a more general Hankel matrix. Although no simple formula for the determinant 
of this more general matrix is expected, we provide some partial results on the account of its evaluation.
Yet another corollary on some Hankel determinants with Bernoulli and Genocchi numbers is presented. 

Finally, as an application of the main result of Section~\ref{sec:hankel_det} and the results of Grommer and Chebotarev, 
we prove the Hurwitz-type theorem for the zeros of the regular Coulomb wave function in Section~\ref{sec:zeros_coulomb}.

\section{The regular Coulomb wave function and the associated zeta function}\label{sec:coulomb_func}

Recall that regular and irregular Coulomb wave functions,
$F_{L}(\eta,\rho)$ and $G_{L}(\eta,\rho)$, are two linearly
independent solutions of the second-order differential equation
\[
  \frac{d^{2}u}{d\rho^{2}}+\left(1-\frac{2\eta}{\rho}
    - \frac{L(L+1)}{\rho^{2}}\right)\!u = 0,
\]
see, for instance, \cite[Chp.~14]{abramowitz64}. The function $F_{L}(\eta,\rho)$ admits the decomposition
\cite[Eqs.~14.1.3 and 14.1.7]{abramowitz64}
\begin{equation}
  F_{L}(\eta,\rho)
  = C_{L}(\eta)\rho^{L+1}\phi_{L}(\eta,\rho),
  \label{eq:F_L_decomp}
\end{equation}
where
\[
C_{L}(\eta):=\frac{2^{L}{\rm e}^{-\pi\eta/2}\,|\Gamma(L+1+i\eta)|}{\Gamma(2L+2)},
\]
and
\begin{equation}
  \phi_{L}(\eta,\rho) := {\rm e}^{-i\rho}\,{}_{1}F_{1}(L+1-i\eta;2L+2;2i\rho).
  \label{eq:phi_rel_1F1}
\end{equation}
The confluent hypergeometric function $\,{}_{1}F_{1}$ is defined by the power series~\cite[Chp.~13]{abramowitz64}
\[
 {}_{1}F_{1}(a;b;z):=\sum_{n=0}^{\infty}\frac{(a)_{n}}{(b)_{n}}\frac{z^{n}}{n!},
\]
for $a,b,z\in\C$, such that $b\notin-\N_{0}$, where the Pochhammer symbol $(a)_{0}=1$ and $(a)_{n}=a(a+1)\dots(a+n-1)$, for $n\in\N$.
Here and below, $\N$ is the set of all positive integers and $\N_{0}:=\{0\}\cup\N$.

For the particular values of parameters, $L=\nu-1/2$ and $\eta=0$, one
gets \cite[Eqs. 14.6.6 and 13.6.1]{abramowitz64}
\[
  F_{\nu-1/2}(0,\rho)  =  \sqrt{\frac{\pi\rho}{2}}\, J_{\nu}(\rho)
\]
and
\begin{equation}
  \phi_{\nu-1/2}(0,\rho)  =  {\rm e}^{-i\rho}\,
  {}_{1}F_{1}(\nu+1/2;2\nu+1;2i\rho)
  \,=\,\Gamma(\nu+1)\left(\frac{2}{\rho}\right)^{\!\nu}
  J_{\nu}(\rho),
\label{eq:phi_part}
\end{equation}
where $J_{\nu}$ is the Bessel function of the first kind. From this point of view, the regular Coulomb wave function represents a one-parameter generalization of the Bessel function of the first kind.

One can see from~\eqref{eq:F_L_decomp} that, with the possible exception of the origin, the zeros of $F_{L}(\eta,\cdot)$ are the same as of $\phi_{L}(\eta,\cdot)$.
If $L\notin-(\N+1)/2$, the function $\phi_{L}(\eta,\cdot)$ is well-defined for all $\eta\in\C$. Even for $L\in-(\N+1)/2$, the function $\phi_{L}(\eta,\cdot)$ is well-defined, 
if we additionally require $\ii\eta\in\Z$ and $L+2+\ii\eta\leq0$, in which case the confluent hypergeometric series in~\eqref{eq:phi_rel_1F1} is terminating. In general, 
the function $F_{L}(\eta,\rho)$ can be continued analytically to the complex values of all variables $L$, $\eta$, and $\rho$; the interested reader is referred to~\cite{dzieciol-etal_jmp99,humblet_ap84,barnett-thompson_jcp86}.

As one can readily verify by using the Taylor coefficients of the confluent hypergeometric function in~\eqref{eq:phi_rel_1F1}, $\phi_{L}(\eta,\cdot)$ is an entire function of order $1$ for $L\notin-(\N+1)/2$ and $\eta\in\C$.
Consequently, the series
\[
 \zeta_{L}(k):=\sum_{n=1}^{\infty}\frac{1}{\rho_{L,n}^{k}},
\]
where $\rho_{L,1},\rho_{L,2},\dots$ are the zeros of $\phi_{L}(\eta,\cdot)$, is absolutely convergent for $k\geq2$, $L\notin-(\N+1)/2$, and $\eta\in\C$. Here we use the notation 
from~\cite{stampachstovicek_jmaa14} where $\zeta_{L}$ is referred to as the spectral zeta function since the zeros of $\phi_{L}(\eta,\cdot)$ are eigenvalues of a certain Jacobi 
operator. Although $\zeta_{L}$ as well as $\rho_{L,n}$ depend also on $\eta$, the dependence is not explicitly designated for brevity. 

Let us remark that $\zeta_{L}(k)$ is a polynomial in $\eta$ and a rational function in $L$ with singularities in the set $-(\N+1)/2$, as it can be seen from the recurrence relation
\begin{equation}
  \zeta_{L}(2) = \frac{1}{2L+3}\left(1+\frac{\eta^{2}}{(L+1)^{2}}\right)
  \label{eq:zeta_F_2}
\end{equation}
and
\begin{equation}
  \zeta_{L}(k+1) = \frac{1}{2L+k+2}\left(\frac{2\eta}{L+1}\,\zeta_{L}(k)+\sum_{l=1}^{k-2}\zeta_{L}(l+1)\zeta_{L}(k-l)\right)\!,\ \ k\geq2,
  \label{eq:zeta_F_genrecur}
\end{equation}
see \cite[Eqs.~(78) and~(79)]{stampachstovicek_jmaa14} or \cite[Eq.~(2.18)]{baricz_jmaa15}.

In the special case of $L=\nu-1/2$ and $\eta=0$, one has
\begin{equation}
\zeta_{\nu-1/2}(k)=\sum_{n\in\Z\setminus\{0\}}\frac{1}{j_{\nu,n}^{k}},
\label{eq:zeta_bessel_zeros}
\end{equation}
where $j_{\nu,1}, j_{\nu,2},\dots$ are the zeros of $J_{\nu}$ for which either $\Re j_{\nu,n}>0$ or $\Im j_{\nu,n}>0$, if $\Re j_{\nu,n}=0$.
The remaining zeros are $j_{\nu,-n}:=-j_{\nu,n}$, for $n\in\N$, which follows from the fact that 
that the function $\rho\mapsto\rho^{-\nu}J_{\nu}(\rho)$ is even and the origin is not a zero.
Consequently, for $k\in\N$, one gets
\begin{equation}
\zeta_{\nu-1/2}(2k+1)=0
\label{eq:zeta_vanish}
\end{equation}
and
\begin{equation}
\zeta_{\nu-1/2}(2k)=2\sigma_{2k}(\nu)=2\sum_{n=1}^{\infty}\frac{1}{j_{\nu,n}^{2k}},
\label{eq:zeta_rayleigh_spec}
\end{equation}
where $\sigma_{2k}(\nu)$ is the Rayleigh function of order $2k$ introduced by Kishore in~\cite{kishore_pams63}.

\section{Hankel determinants}\label{sec:hankel_det}

For $L\notin-(\N+1)/2$, $\eta\in\C$, and $n\in\N$, we define the Hankel matrix
\begin{equation}
  H_{n}(L,\eta):=\begin{pmatrix}
	\zeta_{L}(2) & \zeta_{L}(3) & \dots & \zeta_{L}(n+1)\\
	\zeta_{L}(3) & \zeta_{L}(4) & \dots & \zeta_{L}(n+2)\\
	\vdots & \vdots & \ddots & \vdots\\
	\zeta_{L}(n+1) & \zeta_{L}(n+2) & \dots & \zeta_{L}(2n)
      \end{pmatrix}\!.
\label{eq:def_hankel_mat_coulomb}
\end{equation}
By making use of the properties of a particular family of orthogonal polynomials associated with the regular Coulomb wave function studied in~\cite{stampachstovicek_jmaa14}, we 
may deduce a simple formula for the determinant of $H_{n}(L,\eta)$.

\begin{thm}\label{thm:det_hankel_coulomb}
 For $L\notin-(\N+1)/2$, $\eta\in\C$, and $n\in\N$, one has 
 \begin{equation}
  \det H_{n}(L,\eta)=\prod_{k=0}^{n-1}\frac{1}{(2L+2n-2k+1)^{2k+1}}\left(1+\frac{\eta^{2}}{(L+n-k)^{2}}\right)^{\!k+1}.
 \label{eq:det_hankel_coulomb}
 \end{equation}
\end{thm}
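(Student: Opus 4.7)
My approach is to identify $H_n(L,\eta)$ as a formal Hankel moment matrix and to factor its determinant via an associated family of orthogonal polynomials. Setting $s_k := \zeta_L(k+2)$, one has
\[
s_k = \sum_{m\ge 1}\frac{1}{\rho_{L,m}^{k+2}} = \sum_{m\ge 1}\left(\frac{1}{\rho_{L,m}}\right)^{\!k}\!\frac{1}{\rho_{L,m}^{2}},
\]
so $(H_n(L,\eta))_{ij}=s_{i+j-2}$ are the moments of the discrete (generically complex) ``measure'' $\mu_{L,\eta} := \sum_m \rho_{L,m}^{-2}\,\delta_{\rho_{L,m}^{-1}}$. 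The classical Gram determinant identity, which holds algebraically for generic $L,\eta$ regardless of positivity, then yields
\[
\det H_n(L,\eta) = \prod_{k=0}^{n-1} h_{k},
\]
where $h_k = \langle p_k, p_k\rangle_{\mu_{L,\eta}}$ is the formal squared norm of the $k$-th monic polynomial $p_k$ orthogonal with respect to $\mu_{L,\eta}$.

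The polynomials $\{p_k\}$ are precisely the orthogonal polynomial family associated with the regular Coulomb wave function in~\cite{stampachstovicek_jmaa14}, where they correspond to a Jacobi operator whose eigenvalues are the zeros $\rho_{L,m}$. Although no closed form is available for their coefficients, the Jacobi parameters governing the three-term recurrence $xp_k = p_{k+1}+\beta_k p_k + \alpha_k^2 p_{k-1}$ can be read off, giving the ratios $\alpha_k^2 = h_k/h_{k-1}$. Alternatively, by matching the large-$x$ asymptotic expansion of the formal Stieltjes series $\sum_{k\ge0} s_k/x^{k+1}$ with the continued-fraction expansion produced by the Jacobi recurrence---equivalently, using \eqref{eq:zeta_F_2} and \eqref{eq:zeta_F_genrecur}---one expects to obtain
\[
h_0 = \frac{1}{2L+3}\!\left(1+\frac{\eta^{2}}{(L+1)^{2}}\right), \qquad \alpha_k^2 = \frac{1+\eta^{2}/(L+k+1)^{2}}{(2L+2k+1)(2L+2k+3)}.
\]

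With these values in hand, the product $\prod_{k=0}^{n-1} h_k = h_0^{\,n}\prod_{k=1}^{n-1}(\alpha_k^2)^{n-k}$ expands, after collecting for each index $j$ the total multiplicity with which the factors $(2L+2j+1)^{-1}$ and $1+\eta^{2}/(L+j)^{2}$ appear, to precisely the exponents $2k+1$ and $k+1$ of~\eqref{eq:det_hankel_coulomb} once reindexed by $j\leftrightarrow n-k$; the base case $n=1$ reduces to~\eqref{eq:zeta_F_2} as a consistency check. The hard part will be pinning down the Jacobi parameters $\alpha_k^2$ in the absence of a closed form for the coefficients of $p_k$---a genuine difference from the Bessel case, where the Lommel polynomials provide them explicitly. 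A more self-contained alternative is induction on $n$ using~\eqref{eq:zeta_F_genrecur} and a Schur-complement expansion of $H_n$ to show that $\det H_n/\det H_{n-1}$ equals the expected $h_{n-1}$; however, the quadratic nonlinearity of the recurrence makes the required cancellations considerably less transparent than in the orthogonal-polynomial framework.
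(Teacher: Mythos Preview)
Your approach is correct and essentially identical to the paper's: both identify $H_n(L,\eta)$ with the moment Hankel matrix of the orthogonal polynomial family from \cite{stampachstovicek_jmaa14} and evaluate its determinant from the three-term recurrence coefficients via the classical identity $\det H_n=(\zeta_L(2))^{n}\prod_{m=1}^{n-1}\prod_{j=1}^{m}a_j$ (equivalently your $h_0^{\,n}\prod_{k=1}^{n-1}(\alpha_k^2)^{n-k}$), citing \cite[Chp.~I, Thm.~4.2(a)]{chihara78}. The Jacobi parameters $\alpha_k^2$ you anticipate are exactly the coefficients $a_k$ already recorded in \cite{stampachstovicek_jmaa14} and simply quoted in the paper, so there is no ``hard part'' to pinning them down---your alternative inductive/Schur-complement route is unnecessary.
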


\begin{proof}
 The proof is based on the well-known relation between the recurrence coefficients from the three-term recurrence for a family of orthogonal polynomials and the 
 determinant of the corresponding moment Hankel matrix. Namely, we use that for the orthogonal polynomials generated by the recurrence
 \begin{equation}
  p_{n+1}(z)=(z-b_{n})p_{n}(z)-a_{n}p_{n-1}(z), \quad n\in\N,
 \label{eq:recur_monic_ogp}
 \end{equation}
 with the initial conditions $p_{-1}(z)=0$ and $p_{0}(z)=1$, it holds
 \begin{equation}
  \Delta_{n}=\prod_{m=1}^{n-1}\prod_{j=1}^{m}a_{j}, \quad n\in\N,
 \label{eq:delta_n_rel_a_n}
 \end{equation}
 where
 \[
 \Delta_{0}:=1, \quad \Delta_{n}:=\det\left[\mathcal{L}\left(z^{i+j}\right)\right]_{i,j=0}^{n-1}, \quad n\in\N,
 \]
 and $\mathcal{L}$ is the corresponding normalized moment functional; see~\cite[Chp.~I, Thm.~4.2(a)]{chihara78}.
 
 To obtain the formula from the statement, we make use of the particular family of orthogonal polynomials studied in~\cite{stampachstovicek_jmaa14},
 for which the moment Hankel matrix coincides with $H_{n}(L,\eta)$ up to an unimportant multiplicative factor. These polynomials satisfies~\eqref{eq:recur_monic_ogp}
 with 
 \begin{equation}
  a_{n} = \frac{(n+L+1)^{2}+\eta^{2}}{(n+L+1)^{2}(2n+2L+1)(2n+2L+3)},
  \quad b_{n}=-\frac{\eta}{(n+L+1)(n+L+2)},
  \label{eq:lambda_w_coulomb}
 \end{equation}
 and the corresponding moment sequence is given by
 \[
  \mathcal{L}\left(z^{n}\right)=\frac{\zeta_{L}(n+2)}{\zeta_{L}(2)}, \quad n\in\N_{0},
 \]
 see \cite[Rem.~19]{stampachstovicek_jmaa14}. Consequently, by taking~\eqref{eq:delta_n_rel_a_n} into account, one gets
 \begin{equation}
  \det H_{n}(L,\eta)=\left(\zeta_{L}(2)\right)^{n}\prod_{m=1}^{n-1}\prod_{j=1}^{m}a_{j}, \quad n\in\N.
  \label{eq:det_H_n_rel_moment_det}
 \end{equation}
 Now, it suffices to use~\eqref{eq:zeta_F_2} and~\eqref{eq:lambda_w_coulomb} to deduce the formula from the statement. 
\end{proof}

\begin{rem}
 The functions $\zeta_{L}(n)$, for $n\geq 2$, appear as coefficients in the power series expansion of the logarithmic derivative of $\phi_{L}(\eta,\rho)$ with respect to $\rho$,
 see, for example, the first unlabeled equation above \cite[Eq.~(77)]{stampachstovicek_jmaa14}. 
 As a consequence, Theorem~\ref{thm:det_hankel_coulomb} could be equivalently proved by using the connection between the Hankel determinant~\eqref{eq:def_hankel_mat_coulomb} and 
 the known coefficients from the continued fraction expansion of the logarithmic derivative of $\phi_{L}(\eta,\rho)$, see \cite[Thm.~7.14]{jones_thron-80} and~\cite{barnett-etal_cpc74}.
 On the other hand, one may observe by comparing the formula in \cite[Eq.~7.2.18a]{jones_thron-80} with~\cite[Chp.~I, Thm.~4.2(a)]{chihara78}
 that the coefficient $k_{1}$ from the continued fraction expansion in \cite[Eq.~7.2.16]{jones_thron-80} actually coincides with $\zeta_{L}(2)$ and
 $k_{n}$ coincides with $a_{n-1}$ from \eqref{eq:lambda_w_coulomb} for $n\geq2$. By using \cite[Eq.~7.2.29]{jones_thron-80}, one arrives 
 at~\eqref{eq:det_H_n_rel_moment_det} again.
\end{rem}

As a corollary of Theorem~\ref{thm:det_hankel_coulomb}, we compute determinants of two Hankel matrices with entries given by the Rayleigh function of even order. For this purpose, we define
\[
 H_{n}^{(\ell)}(\nu):=\begin{pmatrix}
	\sigma_{2\ell+2}(\nu) & \sigma_{2\ell+4}(\nu) & \dots & \sigma_{2n+2\ell}(\nu)\\
	\sigma_{2\ell+4}(\nu) & \sigma_{2\ell+6}(\nu) & \dots & \sigma_{2n+2\ell+2}(\nu)\\
	\vdots & \vdots & \ddots & \vdots\\
	\sigma_{2n+2\ell}(\nu) & \sigma_{2n+2\ell+2}(\nu) & \dots & \sigma_{4n+2\ell-2}(\nu)
      \end{pmatrix}\!,
\]
for $n\in\N$, $\ell\in\N_{0}$, and $\nu\notin-\N$. 

\begin{cor}\label{cor:det_hankel_bessel}
 For $n\in\N$, $\nu\notin-\N$, and $\ell\in\{0,1\}$, one has
 \begin{equation}
  \det H_{n}^{(\ell)}(\nu)=2^{-2n(n+\ell)}\prod_{k=1}^{2n+\ell-1}\left(\nu+k\right)^{k-2n-\ell}.
 \label{eq:det_hankel_bessel}
 \end{equation}
\end{cor}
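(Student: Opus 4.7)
The plan is to derive the corollary by specializing Theorem~\ref{thm:det_hankel_coulomb} to $L=\nu-1/2$ and $\eta=0$ and by exploiting the vanishing~\eqref{eq:zeta_vanish} of the spectral zeta at odd integers.

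Inserting $L=\nu-1/2$ and $\eta=0$ into~\eqref{eq:det_hankel_coulomb} (the $\eta$-factor trivializes to~$1$) and re-indexing by $j=n-k$ gives the explicit closed form
\[
  \det H_{n}(\nu-1/2,0) = 2^{-n^{2}} \prod_{j=1}^{n} (\nu+j)^{2j-2n-1}.
\]
On the other hand, \eqref{eq:zeta_vanish} combined with~\eqref{eq:zeta_rayleigh_spec} shows that the $(i,j)$-entry of $H_{n}(\nu-1/2,0)$ equals $2\sigma_{i+j+2}(\nu)$ when $i+j$ is even and vanishes when $i+j$ is odd. Conjugating by the permutation that lists the even indices before the odd ones puts $H_{n}(\nu-1/2,0)$ in block-diagonal form without changing the determinant (the sign of the permutation enters squared). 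Writing $A_{m}:=\det H_{m}^{(0)}(\nu)$ and $B_{m}:=\det H_{m}^{(1)}(\nu)$ and reading off the two diagonal blocks yields the reduction identities
\[
  \det H_{2m}(\nu-1/2,0) = 2^{2m} A_{m} B_{m},
  \qquad
  \det H_{2m+1}(\nu-1/2,0) = 2^{2m+1} A_{m+1} B_{m}.
\]

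These identities, together with the base value $A_{1}=\sigma_{2}(\nu)=1/(4(\nu+1))$ (immediate from~\eqref{eq:zeta_F_2}), determine $A_{m}$ and $B_{m}$ uniquely. A clean execution is to take the claimed formula $2^{-2n(n+\ell)}\prod_{k=1}^{2n+\ell-1}(\nu+k)^{k-2n-\ell}$ for $A_{n}$ and $B_{n}$ and check by induction on $n$ that it satisfies the two reduction identities, using the explicit expression for $\det H_{n}(\nu-1/2,0)$ above on the left-hand sides. Equivalently, one can directly solve the recursion by telescoping the ratios $A_{m+1}/A_{m}$ and $B_{m+1}/B_{m}$, each of which is an explicit quotient of values of $\det H_{n}(\nu-1/2,0)$. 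The only obstacle on either route is the routine bookkeeping of the powers of $2$ (matching $n^{2}$ to $2n^{2}+2n\ell$) and of the exponents on the factors $(\nu+k)$; no further conceptual input is needed beyond the block-diagonal reduction driven by~\eqref{eq:zeta_vanish}.
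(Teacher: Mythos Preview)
Your proposal is correct and follows essentially the same route as the paper: specialize Theorem~\ref{thm:det_hankel_coulomb} at $L=\nu-1/2$, $\eta=0$, use the vanishing~\eqref{eq:zeta_vanish} to block-diagonalize $H_{n}(\nu-1/2,0)$ into the two Rayleigh Hankel blocks, obtain the factorizations $\det H_{2m}=2^{2m}A_mB_m$ and $\det H_{2m+1}=2^{2m+1}A_{m+1}B_m$, and then telescope (or verify inductively) using the initial value $\sigma_{2}(\nu)=1/(4(\nu+1))$. The only cosmetic difference is that you first record the closed form $\det H_{n}(\nu-1/2,0)=2^{-n^{2}}\prod_{j=1}^{n}(\nu+j)^{2j-2n-1}$ before telescoping, whereas the paper substitutes from~\eqref{eq:det_hankel_coulomb} at the end; also note your entry description $2\sigma_{i+j+2}(\nu)$ uses $0$-based indexing while the paper's $2\sigma_{i+j}(\nu)$ uses $1$-based indexing, but this is immaterial.
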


\begin{proof}
 By taking the particular parameters $L=\nu-1/2$ and $\eta=0$ in~\eqref{eq:def_hankel_mat_coulomb} and using~\eqref{eq:zeta_bessel_zeros}, \eqref{eq:zeta_vanish}, and~\eqref{eq:zeta_rayleigh_spec},
 one sees that the $(i,j)$-th element of the matrix $H_{n}(\nu-1/2,0)$ coincides with $2\sigma_{i+j}(\nu)$, if the parity of $i$ and $j$ is the same,
 while it vanishes, if the parity of $i$ and $j$ is different. The latter observation and simple manipulations with the determinants, see for 
 example~\cite[Lem.~1.34]{holtz-tyaglov_siam12}, yield the identities
 \begin{equation}
  \det H_{2n+1}(\nu-1/2,0)=2^{2n+1}\det H_{n+1}^{(0)}(\nu)\det H_{n}^{(1)}(\nu),
  \label{eq:det_col_det_bes_odd}
 \end{equation}
 and 
 \begin{equation}
  \det H_{2n}(\nu-1/2,0)=2^{2n}\det H_{n}^{(0)}(\nu)\det H_{n}^{(1)}(\nu),
  \label{eq:det_col_det_bes_even}
 \end{equation}
 for $n\in\N$. 
 
 From~\eqref{eq:det_col_det_bes_odd} and~\eqref{eq:det_col_det_bes_even}, one deduces that
 \[
  \det H_{n}^{(0)}(\nu)=\frac{\det H_{2n-1}(\nu-1/2,0)}{2\det H_{2n-2}(\nu-1/2,0)}\det H_{n-1}^{(0)}(\nu)
 \]
 and
 \[
  \det H_{n}^{(1)}(\nu)=\frac{\det H_{2n}(\nu-1/2,0)}{2\det H_{2n-1}(\nu-1/2,0)}\det H_{n-1}^{(1)}(\nu),
 \]
 for $n\geq2$, which further implies that
 \begin{equation}
  \det H_{n}^{(0)}(\nu)=\sigma_{2}(\nu)\prod_{k=2}^{n}\frac{\det H_{2k-1}(\nu-1/2,0)}{2\det H_{2k-2}(\nu-1/2,0)}
 \label{det_bes_0_inproof}
 \end{equation}
 and
 \[
  \det H_{n}^{(1)}(\nu)=\sigma_{4}(\nu)\prod_{k=2}^{n}\frac{\det H_{2k}(\nu-1/2,0)}{2\det H_{2k-1}(\nu-1/2,0)},
 \]
 for $n\in\N$. At this point, one can use Theorem~\ref{thm:det_hankel_coulomb} and the
 well-known formulas
 \begin{equation}
  \sigma_{2}(\nu)=\frac{1}{4(\nu+1)} \quad \mbox{ and } \quad \sigma_{4}(\nu)=\frac{1}{16(\nu+1)^{2}(\nu+2)},
 \label{eq:sig2_sig4}
 \end{equation}
 which can be computed, for example, from~\eqref{eq:zeta_F_2} and~\eqref{eq:zeta_F_genrecur}, to verify the formulas from the statement.
 
 Alternatively, one can use~\eqref{eq:det_H_n_rel_moment_det} which particularly reads
 \[
  \det H_{n}(\nu-1/2,0)=\left(2\sigma_{2}(\nu)\right)^{n}\Delta_{n}, \quad n\in\N,
 \]
 together with the identity
 \[
  \frac{\Delta_{n+1}}{\Delta_{n}}=\prod_{j=1}^{n}a_{j},
 \]
 as it follows from~\eqref{eq:delta_n_rel_a_n}, to rewrite the right-hand side of~\eqref{det_bes_0_inproof} getting
 \[
    \det H_{n}^{(0)}(\nu)=\left(\sigma_{2}(\nu)\right)^{n}\prod_{k=2}^{n}\prod_{j=1}^{2k-2}a_{j}, \quad n\in\N.
 \]
 Here $a_{j}$ is given by~\eqref{eq:lambda_w_coulomb} where $L=\nu-1/2$ and $\eta=0$, i.e.,
 \[
   \det H_{n}^{(0)}(\nu)=\left(\sigma_{2}(\nu)\right)^{n}\prod_{k=2}^{n}\prod_{j=1}^{2k-2}\frac{1}{4(\nu+j)(\nu+j+1)}, \quad n\in\N.
 \]
 Analogically, one shows that
  \[
   \det H_{n}^{(1)}(\nu)=\sigma_{4}(\nu)\left(\sigma_{2}(\nu)\right)^{n-1}\prod_{k=2}^{n}\prod_{j=1}^{2k-1}\frac{1}{4(\nu+j)(\nu+j+1)}, \quad n\in\N.
 \]
 Finally, recalling~\eqref{eq:sig2_sig4}, simple algebraic manipulations yield the identities from the statement.
\end{proof}

\begin{rem}
An incorrect formula for $\det H_{n}^{(0)}(\nu)$ appeared earlier in \cite[Thm.~1]{zhangchen_aaa14}. To obtain the correct formula,
one should write $\sigma_{\nu}^{(k+j-1)}$ instead of $\sigma_{\nu}^{(k+j+1)}$ and $2^{-(n+1)(2n+1)}$ instead of $2^{(n+1)(2n+1)}$
in the identity therein.
\end{rem}

\begin{rem}
 The identity~\eqref{eq:det_hankel_bessel} is no longer true if $\ell\geq2$. We do not have a formula for $\det H_{n}^{(\ell)}(\nu)$ for general $\ell\in\N_{0}$.
 Nevertheless, in principle, the determinant can be computed recursively for a fixed $\ell$. Indeed, with the aid of the Desnanot--Jacobi identity, one gets
 \begin{equation}
  \det H_{n+1}^{(\ell)}(\nu)\det H_{n-1}^{(\ell+2)}(\nu)=\det H_{n}^{(\ell+2)}(\nu)\det H_{n}^{(\ell)}(\nu)-\left(\det H_{n}^{(\ell+1)}(\nu)\right)^{\!2},
 \label{eq:desnanot-jacobi_hankel_det_bessel}
 \end{equation}
 for $\ell\in\N_{0}$ and $n\geq 2$. Observe that~\eqref{eq:desnanot-jacobi_hankel_det_bessel} is a first-order difference equation in~$n$ for $\det H_{n}^{(\ell+2)}(\nu)$,
 which can be readily solved. If we temporarily denote $d_{n}^{(\ell)}:=\det H_{n}^{(\ell)}(\nu)$ and ignore for a while a possible division by zero, then the solution of 
 the difference equation~\eqref{eq:desnanot-jacobi_hankel_det_bessel} is given by a somewhat cumbersome formula
 \[
  d_{n}^{(\ell+2)}=d_{n+1}^{(\ell)}\left(\frac{\sigma_{2\ell+6}(\nu)}{\sigma_{2\ell+2}(\nu)\sigma_{2\ell+6}(\nu)-\sigma_{2\ell+4}^{2}(\nu)}
  +\sum_{k=2}^{n}\frac{\left(d_{k}^{(\ell+1)}\right)^{\!2}}{d_{k}^{(\ell)}d_{k+1}^{(\ell)}}\right)\!, \quad n\in\N.
 \]
 
 On the other hand, it is not difficult to use~\eqref{eq:desnanot-jacobi_hankel_det_bessel} and Corollary~\ref{cor:det_hankel_bessel} in order to verify that the Hankel determinants for $\ell=2,3$ read
 \begin{equation}
  \det H_{n}^{(2)}(\nu)=2^{-2n(n+2)}(n+1)(n+\nu+1)\prod_{k=1}^{2n+1}\left(\nu+k\right)^{k-2n-2}
 \label{eq:det_H_n_2_nu}
 \end{equation}
 and
 \begin{align*}
  \det H_{n}^{(3)}(\nu)&=2^{-2n(n+3)}\prod_{k=1}^{2n+2}\left(\nu+k\right)^{k-2n-3}\\
  &\times\frac{1}{6}(n+1)(n+2)(n+\nu+1)(n+\nu+2)\left[2n^{2}+6n+3+\nu(2n+3)\right]\!.
 \end{align*}
\end{rem}

\begin{rem}
 Besides the obvious positivity of $\det H_{n}^{(\ell)}(\nu)$ for $n\in\N$, $\ell\in\{0,1\}$, and $\nu>-1$, one can also use the formula~\eqref{eq:det_hankel_bessel} 
 to prove that $\det H_{n}^{(\ell)}(\nu)$, as function of~$\nu$, is completely monotone for $\nu>-1$, with $n\in\N$ and $\ell\in\{0,1\}$ fixed.
 This can be easily checked by using the fact that a product of completely monotone functions is a completely monotone function. Similarly, one can use~\eqref{eq:det_H_n_2_nu}
 to verify the complete monotonicity of $\det H_{n}^{(2)}(\nu)$ for $\nu>-1$, with $n\in\N$ fixed. It seems that the same holds true even for $\ell\geq3$, however, 
 the verification would require a more sophisticated analysis.
\end{rem}

At last, we formulate yet another corollary which is obtained by even more special choice of parameters taking $\eta=0$ and either $L=0$ or $L=-1$. Recall that $\sigma_{2n}(\pm 1/2)$ can be expressed
in terms of even values of Riemann zeta function, which, in its turn, can be evaluated with the aid of Bernoulli numbers. Namely, one has \cite[Eqs.~(4) and~(5)]{kishore_pams63}
\begin{equation}
 \sigma_{2n}\left(\frac{1}{2}\right)=\frac{\zeta(2n)}{\pi^{2n}}=(-1)^{n+1}\frac{2^{2n-1}}{(2n)!}B_{2n}, \quad n\in\N,
\label{eq:sigma_2n_1/2}
\end{equation}
and
\begin{equation}
 \sigma_{2n}\left(-\frac{1}{2}\right)=\left(2^{2n}-1\right)\frac{\zeta(2n)}{\pi^{2n}}=(-1)^{n}\frac{2^{2n-2}}{(2n)!}G_{2n}, \quad n\in\N,
\label{eq:sigma_2n_-1/2}
\end{equation}
where $G_{n}=2(1-2^{n})B_{n}$ are the Genocchi numbers. 

The special case of Corollary~\ref{cor:det_hankel_bessel} yields formulas for determinants of Hankel matrices 
with entries given by either $B_{2n+2\ell}/(2n+2\ell)!$ or $G_{2n+2\ell}/(2n+2\ell)!$ for $\ell\in\{0,1\}$.
Formulas for determinants of Hankel matrices with entries given just by $B_{2n+2\ell}$, for $\ell\in\{0,1\}$, 
can be found in~\cite[Eqs.~(3.59) and~(3.60)]{krattenthaler_slc99}. The determinant of the Hankel matrix with 
entries given by $G_{2n}$ can be deduced from~\cite[Eq.~(3.12)]{dumontzeng-am94}.

\begin{cor}
 For all $n\in\N$ and $\ell\in\{0,1\}$, one has
 \begin{equation}
  \det\left(\frac{B_{2j+2i+2\ell-2}}{(2j+2i+2\ell-2)!}\right)_{i,j=1}^{n}=(-1)^{n\ell}2^{-n(4n+4\ell-1)}\prod_{k=1}^{2n+\ell-1}\left(k+\frac{1}{2}\right)^{\!k-2n-\ell}
 \label{eq:hankel_det_bernoulli}
 \end{equation}
 and
 \begin{equation}
  \det\left(\frac{G_{2j+2i+2\ell-2}}{(2j+2i+2\ell-2)!}\right)_{i,j=1}^{n}=(-1)^{n(\ell+1)}2^{-n(4n+4\ell-2)}\prod_{k=1}^{2n+\ell-1}\left(k-\frac{1}{2}\right)^{\!k-2n-\ell}.
 \label{eq:hankel_det_genocchi}
 \end{equation}
\end{cor}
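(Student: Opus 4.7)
The plan is to obtain both identities as direct specializations of Corollary~\ref{cor:det_hankel_bessel}, namely $\nu=1/2$ for the Bernoulli case and $\nu=-1/2$ for the Genocchi case, after rewriting the entries of the Hankel matrices in terms of the Rayleigh values $\sigma_{2m}(\pm 1/2)$ via \eqref{eq:sigma_2n_1/2} and \eqref{eq:sigma_2n_-1/2}.

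Concretely, I would first note that the $(i,j)$-entry of $H_{n}^{(\ell)}(\nu)$ equals $\sigma_{2(i+j+\ell-1)}(\nu)$, so it shares its index structure with the Bernoulli and Genocchi matrices in the statement. Writing $m=i+j+\ell-1$ and solving \eqref{eq:sigma_2n_1/2} for $B_{2m}/(2m)!$ yields
\[
\frac{B_{2m}}{(2m)!}=(-1)^{m+1}2^{1-2m}\sigma_{2m}(1/2)=(-1)^{i+j+\ell}\,2^{3-2i-2j-2\ell}\,\sigma_{2(i+j+\ell-1)}(1/2),
\]
and analogously from \eqref{eq:sigma_2n_-1/2}
\[
\frac{G_{2m}}{(2m)!}=(-1)^{i+j+\ell+1}\,2^{4-2i-2j-2\ell}\,\sigma_{2(i+j+\ell-1)}(-1/2).
\]
The key observation is that every factor on the right splits as a product of a row-only factor, a column-only factor, and a constant factor per entry.

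Pulling the row factor $(-1)^{i}2^{-2i}$ out of row $i$ and the column factor $(-1)^{j}2^{-2j}$ out of column $j$, and pulling the per-entry constant out as an $n$-th power, I would obtain
\[
\det\!\left(\frac{B_{2j+2i+2\ell-2}}{(2j+2i+2\ell-2)!}\right)_{i,j=1}^{n}=(-1)^{n\ell}\,2^{-2n(n+1)+n(3-2\ell)}\det H_{n}^{(\ell)}(1/2),
\]
where the row/column signs cancel since $(-1)^{n(n+1)}=1$. The analogous manipulation in the Genocchi case yields the prefactor $(-1)^{n(\ell+1)}2^{-2n(n+1)+n(4-2\ell)}$ in front of $\det H_{n}^{(\ell)}(-1/2)$.

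Finally, I would insert the closed form \eqref{eq:det_hankel_bessel} for $\det H_{n}^{(\ell)}(\pm 1/2)$ and collect the powers of $2$: the Bernoulli exponent becomes $-2n(n+1)+n(3-2\ell)-2n(n+\ell)=-n(4n+4\ell-1)$, and the Genocchi exponent becomes $-2n(n+1)+n(4-2\ell)-2n(n+\ell)=-n(4n+4\ell-2)$, which match the claimed identities exactly. I expect no genuine obstacle here: the proof is essentially bookkeeping of signs and powers of~$2$, and the only thing to watch carefully is that the factor $(-1)^\ell$ (respectively $(-1)^{\ell+1}$) comes out to the $n$-th power while the $i$- and $j$-dependent signs square to $1$.
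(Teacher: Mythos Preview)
Your proposal is correct and follows essentially the same approach as the paper's proof: the paper phrases the row/column extraction as the single observation that $h_{m}=\alpha^{m}\tilde h_{m}$ implies $\det H_{n}=\alpha^{n(n+1)}\det\tilde H_{n}$ via a diagonal conjugation, which is exactly your pulling of the factors $(-1)^{i}2^{-2i}$ and $(-1)^{j}2^{-2j}$ from rows and columns, and then it likewise applies Corollary~\ref{cor:det_hankel_bessel} at $\nu=\pm 1/2$. Your intermediate prefactors $(-1)^{n\ell}2^{-n(2n+2\ell-1)}$ and $(-1)^{n(\ell+1)}2^{-n(2n+2\ell-2)}$ agree with those in the paper, and the final bookkeeping of powers of $2$ is correct.
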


\begin{proof}
 First, observe that if $H_{n}$ and $\tilde{H}_{n}$ are two Hankel matrices with $(H_{n})_{i,j}=h_{i+j}$ and $(\tilde{H}_{n})_{i,j}=\tilde{h}_{i+j}$ such that 
 $h_{m}=\alpha^{m} \tilde{h}_{m}$, for some $\alpha\in\C$, then $H_{n}=D_{n}(\alpha)\tilde{H}_{n}D_{n}(\alpha)$, where $D_{n}(\alpha)=\diag(\alpha,\alpha^{2}\dots,\alpha^{n})$. 
 In particular, one has $\det H_{n}=\alpha^{n(n+1)}\det \tilde{H}_{n}$.
 
 By using the above observation together with~\eqref{eq:sigma_2n_1/2} and~\eqref{eq:sigma_2n_-1/2}, one obtains
 \[
  \det\left(\frac{B_{2j+2i+2\ell-2}}{(2j+2i+2\ell-2)!}\right)_{i,j=1}^{n}=(-1)^{n\ell}2^{-n(2n+2\ell-1)}\det H_{n}^{(\ell)}\left(\frac{1}{2}\right)
 \]
 and
 \[
  \det\left(\frac{G_{2j+2i+2\ell-2}}{(2j+2i+2\ell-2)!}\right)_{i,j=1}^{n}=(-1)^{n(\ell+1)}2^{-n(2n+2\ell-2)}\det H_{n}^{(\ell)}\left(-\frac{1}{2}\right)\!,
 \]
 for $n\in\N$. Now, it suffices to apply Corollary~\ref{cor:det_hankel_bessel}.
\end{proof}

\begin{rem}
 For $\ell=0$, the formula~\eqref{eq:hankel_det_bernoulli} is a correct version of the expression that appeared in~\cite[Cor.~2]{zhangchen_aaa14}. In addition,
 it shows that the determinant $\Delta_{m}'$ considered in~\cite[Ex.~3]{kytmanovkhodos_caop17} is indeed positive for all $m\in\N$.
\end{rem}

\begin{rem}
 It might be also of interest that both determinants~\eqref{eq:hankel_det_bernoulli} and~\eqref{eq:hankel_det_genocchi} are equal to a reciprocal value of an integer
 which is easy to check.
\end{rem}

\section{An application to the zeros of the regular Coulomb wave function}\label{sec:zeros_coulomb}

In \cite[Prop.~13]{stampachstovicek_jmaa14}, it was proved that the zeros of $\phi_{L}(\eta,\cdot)$ are all real if $-1\neq L>-3/2$ and $\eta\in\R\setminus\{0\}$ or $L>-3/2$ 
and $\eta=0$. This was also observed earlier by Ikebe~\cite{ikebe_mc75} for integer values of $L$ (which is an unnecessary restriction). As an application of 
Theorem~\ref{thm:det_hankel_coulomb} combined with general results due to Grommer~\cite{grommer_14} and Chebotarev~\cite{chebotarev_ma28}, we can provide an 
alternative proof of the above statement and, moreover, complement it by adding an information on the exact number of complex zeros of the regular Coulomb wave 
function. 

For this purpose, we recall the results of Grommer and Chebotarev in a special form adjusted to the situation
concerning the entire functions of order~$1$, which is the case of interest here. Chebotarev generalized the theorem of Grommer which, in its turn, is a
generalization of an analogous statement known for polynomials and attributed to Hermite~\cite{hermite_56}. For further research on this account, the reader may also 
consult~\cite{chebotarev-meiman_49,krein_62,krein-langer_mn77}. Some of these classical results were recently rediscovered by Kytmanov and Khodos~\cite{kytmanovkhodos_caop17} 
not mentioning the references~\cite{grommer_14,chebotarev_ma28}.

\begin{thm}\label{thm:grommer-chebotarev}
 Let $f$ be an entire function of order $1$ with real Taylor coefficients. Denote
 \[
  D_{-1}:=1 \quad \mbox{ and } \quad D_{n}:=\det\left(s_{i+j}\right)_{i,j=0}^{n-1}, \; \mbox{ for } n\in\N_{0},
 \]
 where
 \[
  s_{k}:=\sum_{j=1}^{\infty}\frac{1}{z_{j}^{k+2}}, \quad k\in\N_{0},
 \]
 and $z_{1},z_{2},\dots$ are the zeros of $f$. Then the following statements hold true.
	\begin{enumerate}[{\upshape i)}]
	\item $\mathrm{(Grommer)}$ All the zeros of $f$ are real if and only if $D_{n}>0$ for all $n\in\N_{0}$.
	\item $\mathrm{(Chebotarev)}$ If the sequence $\{D_{n-1}D_{n}\}_{n\in\N_{0}}$ contains exactly $m$ negative numbers, 
	 then the function $f$ has $m$ distinct pairs of complex conjugate zeros and an infinite number of real zeros.
	\end{enumerate}
\end{thm}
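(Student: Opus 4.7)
The plan is to translate both parts into an analysis of the signature of the real symmetric Hankel form $Q_n(x) := \sum_{i,j=0}^{n-1} s_{i+j} x_i x_j$ on $\R^n$, whose leading principal minors are precisely $D_1,\dots,D_n$. First I would derive the key identity
\begin{equation*}
Q_n(x) \;=\; \sum_k \frac{P(1/z_k)^2}{z_k^2}, \qquad P(w):=\sum_{i=0}^{n-1} x_i w^i,
\end{equation*}
by substituting the definition of $s_{i+j}$ and swapping the finite double sum with the absolutely convergent series over zeros (justified because $f$ has order $1$, so $\sum |z_j|^{-2}<\infty$).

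Second, I would exploit the fact that the real Taylor coefficients force non-real zeros to appear in complex conjugate pairs. Splitting the sum accordingly,
\begin{equation*}
Q_n(x)\;=\;\sum_{z_k\in\R}\frac{P(1/z_k)^2}{z_k^2}\;+\;2\sum_{\{z,\bar z\}}\Re\!\left[\frac{P(1/z)^2}{z^2}\right]\!,
\end{equation*}
each real zero contributes a positive rank-one form; each complex conjugate pair, written via $P(1/z)/z=u(x)+\ii v(x)$ with $u,v$ real linear forms of $x$, contributes $2\bigl(u(x)^2-v(x)^2\bigr)$, a rank-two form of signature $(1,1)$. Consequently, if $f$ has exactly $m$ pairs of complex conjugate zeros, then for every $n$ the form $Q_n$ has at most $m$ negative eigenvalues, and for all sufficiently large $n$ it has exactly $m$. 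Since $f$ has infinitely many (necessarily real) further zeros, the moment-problem-type argument guarantees non-degeneracy of $Q_n$ for each $n$, so $D_n\neq 0$ throughout.

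Third, for part (i), if all zeros of $f$ are real then $Q_n$ is a sum of squares of real linear forms, hence positive semi-definite and, by non-degeneracy, positive definite; so $D_n>0$ for all $n\in\N_0$. Conversely, if $D_n>0$ for every $n$, then $Q_n$ is positive definite for every $n$, ruling out any complex conjugate pair (which would surface a negative direction at some finite level), hence all zeros are real. For part (ii), I would invoke the classical Jacobi--Frobenius sign rule: for a non-degenerate real symmetric matrix, the number of negative eigenvalues equals the number of indices $k$ such that $D_{k-1}D_k<0$. Combining this with the stabilized signature count $m$ established in the previous step, and letting $n\to\infty$, yields Chebotarev's claim.

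The main obstacle will be the rigorous passage from the finite truncations to an infinite-dimensional form, and in particular checking that $m$ complex conjugate pairs force \emph{exactly} $m$ sign changes in the full sequence of minors rather than only ``at most $m$''. A clean way to handle this is to approximate $f$ by its truncated Hadamard products (polynomials of large degree $N$) and invoke the classical Hermite theorem for polynomials, as in Chebotarev~\cite{chebotarev_ma28}; letting $N\to\infty$ while using term-by-term convergence of the Hankel entries $s_k$ (an immediate consequence of uniform convergence of $\sum z_j^{-k-2}$ on compact sets of parameters) transfers the exact signature count to the entire-function setting.
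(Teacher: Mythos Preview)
The paper does not supply a proof of this theorem at all: it is quoted as a classical result, with attribution to Grommer~\cite{grommer_14} and Chebotarev~\cite{chebotarev_ma28}, and is then applied as a black box in the proof of Theorem~\ref{thm:coulomb_zeros}. So there is no ``paper's own proof'' to compare your proposal against.

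That said, your outline is essentially the classical Hermite quadratic-form argument, which is indeed the route taken in the original references: represent the Hankel form as $\sum_k z_k^{-2}P(1/z_k)^2$, read off the signature from the real/complex-conjugate splitting, and invoke the Jacobi sign rule for leading principal minors. Two points deserve more care than your sketch gives them. First, your non-degeneracy claim (``$D_n\neq 0$ throughout'') hinges on there being infinitely many \emph{distinct} reciprocals $1/z_k$; an order-$1$ entire function with real Taylor coefficients need not have infinitely many zeros (e.g.\ $z\,e^z$), so as stated the theorem tacitly assumes this, and you should make the assumption explicit rather than derive it. Second, the delicate direction in~(ii) --- that $m$ sign changes in $\{D_{n-1}D_n\}$ forces \emph{exactly} $m$ non-real pairs --- is the one you flag, and your proposed fix via truncated Hadamard products is on the right track but not yet an argument: determinant signs are discontinuous at zero, so term-wise convergence of the $s_k$ does not by itself transfer the count. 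Chebotarev handles this by a more careful limiting analysis; if you want a self-contained write-up you will need to control the minors uniformly away from zero for large $n$, which again uses the infinitude of real zeros.
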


Now we are ready to prove the following statement.

\begin{thm}\label{thm:coulomb_zeros}
 Suppose $\eta,L\in\R$. The the following claims hold true.
	\begin{enumerate}[{\upshape i)}]
	\item If $-1\neq L>-3/2$ for $\eta\neq0$ and $L>-3/2$ for $\eta=0$, then all zeros of $F_{L}(\eta,\cdot)$ are real.
	\item If $L<-3/2$ and $L \notin-\N/2$ for $\eta\neq0$ and $L \notin-\N-1/2$ for $\eta=0$, then $F_{L}(\eta,\cdot)$ 
	has $\lfloor-L-1/2\rfloor$ distinct pairs of complex conjugate zeros and an infinite number of real zeros.
	\end{enumerate}
\end{thm}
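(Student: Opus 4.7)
My strategy is to apply Theorem~\ref{thm:grommer-chebotarev} to $\phi_{L}(\eta,\cdot)$, using the explicit formula of Theorem~\ref{thm:det_hankel_coulomb} to control the signs of the Hankel determinants. Three preparatory observations set up the bridge: (a) by~\eqref{eq:F_L_decomp} the non-zero zeros of $F_{L}(\eta,\cdot)$ coincide with those of $\phi_{L}(\eta,\cdot)$, so it suffices to analyze $\phi_{L}(\eta,\cdot)$; (b) for $L,\eta\in\R$ the function $\phi_{L}(\eta,\cdot)$ is entire of order $1$ (as already noted in Section~\ref{sec:coulomb_func}) with real Taylor coefficients, the latter following from the reality of $F_{L}(\eta,\rho)$ for $\rho>0$ together with~\eqref{eq:F_L_decomp} and the Schwarz reflection principle; and (c) the moments $s_{k}$ from Theorem~\ref{thm:grommer-chebotarev} are precisely $\zeta_{L}(k+2)$, making $D_{n}$ identical to $\det H_{n}(L,\eta)$ for every $n\in\N$.

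With this identification in hand, both parts reduce to a sign analysis of~\eqref{eq:det_hankel_coulomb}. Under either set of hypotheses the factors $\bigl(1+\eta^{2}/(L+n-k)^{2}\bigr)^{k+1}$ are strictly positive: they reduce to $1$ when $\eta=0$, while for $\eta\neq 0$ the excluded values of $L$ (namely $L=-1$ in part~(i) and $L\in-\N/2$ in part~(ii)) cover precisely those for which some $L+n-k$ could vanish. After reindexing with $m=n-k$ the sign reduces to
\[
\operatorname{sgn}\bigl(\det H_{n}(L,\eta)\bigr)=\prod_{m=1}^{n}\operatorname{sgn}(2L+2m+1),
\]
so that the sign of $D_{n}$ is governed solely by the parity of the number of $m\in\{1,\ldots,n\}$ for which $m<-L-1/2$.

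For part~(i), the assumption $L>-3/2$ forces $2L+2m+1>0$ for every $m\geq 1$, so $D_{n}>0$ for all $n\in\N_{0}$ (using also $D_{-1}=D_{0}=1$), and Grommer's half of Theorem~\ref{thm:grommer-chebotarev} delivers reality of all zeros. For part~(ii), put $\mu:=\lfloor-L-1/2\rfloor$; since $L<-3/2$ one has $\mu\geq 1$, and the hypothesis that $L$ is not a negative half-integer guarantees $2L+2m+1\neq 0$ for every $m\in\N$. A direct count then gives $\operatorname{sgn}(D_{n})=(-1)^{\min(n,\mu)}$, so that the sequence $\{D_{n-1}D_{n}\}_{n\in\N_{0}}$ is negative precisely for $n\in\{1,2,\ldots,\mu\}$, yielding exactly $\mu$ negative terms. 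Chebotarev's half of Theorem~\ref{thm:grommer-chebotarev} then delivers $\mu=\lfloor-L-1/2\rfloor$ distinct pairs of complex conjugate zeros together with infinitely many real zeros of $\phi_{L}(\eta,\cdot)$, hence of $F_{L}(\eta,\cdot)$.

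I do not foresee a serious obstacle; the whole argument is essentially sign bookkeeping once Theorems~\ref{thm:det_hankel_coulomb} and~\ref{thm:grommer-chebotarev} are in place. The one delicate point worth verifying is that $\phi_{L}(\eta,\cdot)$ is not a polynomial (so that the Chebotarev conclusion of infinitely many real zeros is substantive), but this is clear from the factor $\mathrm{e}^{-\mathrm{i}\rho}$ in~\eqref{eq:phi_rel_1F1} combined with the reality of $\eta$. I would also double-check by hand that the product formula has the asserted sign for $n=1$ and $n=\mu,\mu+1$, to make sure the counting of sign changes is not off by one.
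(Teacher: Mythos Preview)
Your proposal is correct and follows essentially the same route as the paper: apply the Grommer--Chebotarev criterion to $\phi_{L}(\eta,\cdot)$, verify the reality of its Taylor coefficients, and read off the signs of the Hankel determinants from~\eqref{eq:det_hankel_coulomb}. Your sign bookkeeping via $\operatorname{sgn}(D_{n})=(-1)^{\min(n,\mu)}$ is equivalent to the paper's direct computation of $D_{n-1}D_{n}$, and your reality argument via Schwarz reflection is a valid alternative to the paper's use of the Kummer transform.

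One small point deserves attention: the case $\eta=0$, $L=-1$ in part~(i). Here $2L+2=0$, so $\phi_{L}(\eta,\cdot)$ is not defined through~\eqref{eq:phi_rel_1F1}, the factor $C_{L}(\eta)$ in~\eqref{eq:F_L_decomp} has a pole, and Theorem~\ref{thm:det_hankel_coulomb} as stated requires $L\notin-(\N+1)/2$, which excludes $L=-1$. Your sentence ``they reduce to $1$ when $\eta=0$'' and your reality argument via $F_{L}$ both implicitly assume these objects are well defined. The paper handles this exceptional value by interpreting $\phi_{L}(0,\cdot)$ through the Bessel reduction~\eqref{eq:phi_part} and invoking Corollary~\ref{cor:det_hankel_bessel} (with $\nu=L+1/2=-1/2\notin-\N$) in place of~\eqref{eq:det_hankel_coulomb}. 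You should add a line to cover this case; otherwise the argument is complete.
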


\begin{proof}
Recall that the zeros of $F_{L}(\eta,\cdot)$ are the same as the zeros of $\phi_{L}(\eta,\rho)$ with the possible exception of the origin
as it follows from~\eqref{eq:F_L_decomp}. We apply Theorem~\ref{thm:grommer-chebotarev} to the function $f(\rho):=\phi_{L}(\eta,\rho)$ which is entire of order $1$.

It is by no means obvious from \eqref{eq:phi_rel_1F1} that the Taylor coefficients of $f$ are real for $\eta, L\in\R$.
To see that this is indeed the case, one can apply the Kummer transform \cite[Eq.~13.1.27]{abramowitz64}
\[
 {}_{1}F_{1}(a;b;z)=e^{z}\,_{1}F_{1}(b-a;b;-z), \quad a,b,z\in\C,\ b\notin-\N_{0}, 
\]
in~\eqref{eq:phi_rel_1F1}. This shows that $\overline{f(\rho)}=f(\overline{\rho})$, and hence the assumptions of Theorem~\ref{thm:grommer-chebotarev}
are fulfilled.

Denote $D_{n}:=\det H_{n+1}(L,\eta)$ for $n\in\N_{0}$. It is obvious from the identity~\eqref{eq:det_hankel_coulomb} that, for $-1\neq L>-3/2$ and $\eta\neq0$, 
$D_{n}>0$ for all $n\in\N_{0}$. If $\eta=0$, then by~\eqref{eq:det_col_det_bes_odd} and~\eqref{eq:det_col_det_bes_even}, $D_{n}$ is equal to the product
of the expressions given in~\eqref{eq:det_hankel_bessel} where $\nu=L+1/2$. It can be readily checked that both these expressions are positive for $L>-3/2$
and so the value $L=-1$ need not be excluded in this special case. In total, the claim (i) follows from the part (i) of Theorem~\ref{thm:grommer-chebotarev}.

Further, it follows from~\eqref{eq:det_hankel_coulomb} that
\begin{align}
 D_{n-1}D_{n}=\frac{1}{2L+2n+3}&\left(1+\frac{\eta^{2}}{(L+n+1)^{2}}\right)\nonumber\\
 &\times\prod_{k=0}^{n-1}\frac{1}{(2L+2n-2k+1)^{4k+4}}\left(1+\frac{\eta^{2}}{(L+n-k)^{2}}\right)^{\!2k+3},
 \label{eq:DD_eta_neq_0}
\end{align}
for $\eta\neq0$, $L\notin-(\N+1)/2$, and $n\in\N_{0}$. Similarly as above, in the particular case when $\eta=0$, $\phi_{L}(0,\rho)$ is to be understood as the Bessel function in the sense of~\eqref{eq:phi_part}, and
the negative integer values of $L$ need not be excluded. In this case, the formula~\eqref{eq:DD_eta_neq_0} takes the form
\begin{equation}
 D_{n-1}D_{n}=\frac{1}{2L+2n+3}\prod_{k=0}^{n-1}\frac{1}{(2L+2n-2k+1)^{4k+4}},
\label{eq:DD_eta_eq_0}
\end{equation}
and holds true for $L\notin-\N-1/2$ and $n\in\N_{0}$. Recall that, in~\eqref{eq:DD_eta_neq_0} and~\eqref{eq:DD_eta_eq_0}, $D_{-1}=1$ and the empty product is set $1$ by definition. 
In any case, it is clear from~\eqref{eq:DD_eta_neq_0} and~\eqref{eq:DD_eta_eq_0} that the sign of $D_{n-1}D_{n}$ equals the sign of the factor $2L+2n+3$.
Consequently, the number of negative elements in $\{D_{n-1}D_{n}\}_{n\in\N_{0}}$ coincides with the the number of elements of the set $\{n\in\N_{0} \mid 2L+2n+3<0\}$.
This observation together with the part (ii) of Theorem~\ref{thm:grommer-chebotarev} implies the claim~(ii) and the statement is proved.
\end{proof}

Apart from the claim on the purely imaginary zeros, the particular case of Theorem~\ref{thm:coulomb_zeros} with $\eta=0$ 
and $L=\nu-1/2$ implies Hurwitz's theorem about the zeros of the Bessel function of the first kind, which can be formulated as follows.

\begin{thm}[Hurwitz] \label{thm:hurwitz} Then following statements hold true.
	\begin{enumerate}[{\upshape i)}]
	\item If $\nu>-1$, then all zeros of $J_{\nu}$ are real.
	\item If $-2s-2<\nu<-2s-1$ for $s\in\mathbb{N}_0$, then $J_{\nu}$ has $4s+2$ complex zeros, of which two are purely imaginary.
	\item If $-2s-1<\nu<-2s$ for $s\in\mathbb{N}$, then $J_{\nu}$ has $4s$ complex zeros, of which none are purely imaginary.
	\end{enumerate}
\end{thm}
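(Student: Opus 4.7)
The strategy is to specialize Theorem~\ref{thm:coulomb_zeros} to $\eta=0$ and $L=\nu-1/2$. By~\eqref{eq:phi_part}, the functions $\phi_{\nu-1/2}(0,\cdot)$ and $J_{\nu}$ share the same non-zero zeros, so the zero structure of $J_{\nu}$ can be read off from $\phi_{\nu-1/2}(0,\cdot)$. Claim~(i) is then an immediate translation of Theorem~\ref{thm:coulomb_zeros}(i), since $\nu>-1$ is equivalent to $L>-3/2$. For claims~(ii) and~(iii), I would apply Theorem~\ref{thm:coulomb_zeros}(ii); the excluded set $L\in-\N-1/2$ corresponds to $\nu\in-\N$, which is automatically avoided on both open intervals. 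The number of distinct complex-conjugate pairs equals $\lfloor -L-1/2\rfloor=\lfloor -\nu\rfloor$, giving $2s+1$ pairs (hence $4s+2$ complex zeros) in case~(ii) and $2s$ pairs (hence $4s$ complex zeros) in case~(iii).

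What remains is the refinement on the exact number of purely imaginary zeros, which, as the paragraph preceding the statement flags, is not encoded in Theorem~\ref{thm:coulomb_zeros}. I would handle this by exploiting the evenness and real Taylor coefficients of $\rho\mapsto\rho^{-\nu}J_{\nu}(\rho)$: off-axis complex zeros group into quadruples $\{\pm\rho_{0},\pm\overline{\rho_{0}}\}$, while purely imaginary zeros group into pairs $\pm\ii y_{0}$ with $y_{0}>0$. Setting $x=\rho^{2}/4$, the purely imaginary pairs are in bijection with the positive real zeros of the real entire function
\[
\Phi_{\nu}(t):=\sum_{k=0}^{\infty}\frac{t^{k}}{k!\,\Gamma(\nu+k+1)},
\]
which is $I_{\nu}$ up to a power of $t$. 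A direct inspection of the signs of $1/\Gamma(\nu+k+1)$ combined with Descartes' rule of signs shows that the number of positive zeros of $\Phi_{\nu}$ is at most $2s+1$ with odd parity in case~(ii) and at most $2s$ with even parity in case~(iii); combined with $\Phi_{\nu}(0^{+})=1/\Gamma(\nu+1)<0$ in case~(ii) and $\Phi_{\nu}(+\infty)=+\infty$, this already forces at least one positive zero in case~(ii).

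Sharpening ``at most $2s+1$ and odd'' to exactly~$1$ in case~(ii), and ``at most $2s$ and even'' to exactly~$0$ in case~(iii), is the main obstacle, since Descartes' rule and the Grommer--Chebotarev count only deliver parity and upper bounds. The natural route is induction on $s$ using the Rolle-type identity $(z^{-\nu}I_{\nu}(z))'=z^{-\nu}I_{\nu+1}(z)$, which forces the numbers of positive zeros of $I_{\nu}$ and $I_{\nu+1}$ to differ by at most one; the base case $\nu>-1$ (where $I_{\nu}>0$ on $(0,\infty)$) together with the parity constraint from the total complex-zero count then pins down the count at every step. Carefully controlling how positive zeros of $I_{\nu}$ appear or disappear as $\nu$ crosses the negative integers is the delicate step that I would expect to require the most care.
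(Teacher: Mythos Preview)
Your reduction to Theorem~\ref{thm:coulomb_zeros} with $\eta=0$, $L=\nu-1/2$ is exactly what the paper does: the sentence preceding the statement says that this specialization yields Hurwitz's theorem \emph{apart from the claim on the purely imaginary zeros}, and no further argument is given. So for claim~(i) and for the total complex-zero counts in~(ii) and~(iii) your proposal coincides with the paper's treatment, including the arithmetic $\lfloor -L-1/2\rfloor=\lfloor -\nu\rfloor\in\{2s+1,2s\}$.

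Where you diverge is that you attempt to supply the purely imaginary refinement, which the paper explicitly leaves aside (the subsequent Remark even notes that this feature appears to be special to the Bessel case). Your parity argument via the quadruple/pair symmetry of zeros of the even real-coefficient function $\rho^{-\nu}J_\nu(\rho)$ is sound and already forces $p$ odd in~(ii) and $p$ even in~(iii). The inductive scheme based on $(z^{-\nu}I_\nu(z))'=z^{-\nu}I_{\nu+1}(z)$ is the classical route and can be made to work, but as you acknowledge it is the part requiring genuine care. One caution: Descartes' rule of signs is a statement about polynomials, and invoking it directly for the entire function $\Phi_\nu$ is not standard; you do not actually need it, since the upper bound $p\le \lfloor -\nu\rfloor$ already follows from the total complex-zero count furnished by Theorem~\ref{thm:coulomb_zeros}(ii), and the parity comes from the symmetry argument. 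With that adjustment, your sketch goes strictly beyond what the paper proves.
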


\begin{rem}
 The occurrence of a pair of purely imaginary zeros seems to be a special feature of the Bessel function.
 No similar phenomenon was observed in the general case of the Coulomb wave function. The latter statement, however, is based on numerical experiments only. 
 For instance, with the aid of Wolfram Mathematica 11, we found the following numerical values for the non-real zeros of $\phi_{L}(3/2,\cdot)$:
 \begin{align*}
  0.1500\pm \ii0.2520, &\quad\mbox{ for } L=-7/4,\\
  -0.2147\pm\ii0.8230,\ 0.5887\pm\ii0.4090, &\quad\mbox{ for } L=-11/4,\\
  -0.8719\pm\ii1.2916,\ 0.3538 \pm\ii1.2646,\ 1.1374\pm\ii0.5345, &\quad\mbox{ for } L=-15/4.
 \end{align*}
\end{rem}

\section*{Acknowledgments}
The research of \' Arp\'ad Baricz was supported by the STAR-UBB Advanced Fellowship-Intern of the Babe\c{s}-Bolyai 
University of Cluj-Napoca. This author is also grateful to Mourad E.~H.~Ismail and \'Agoston R\'oth for useful discussions.
The authors thank both referees for useful comments, in particular, for providing the original references for Theorem~\ref{thm:grommer-chebotarev}.

\end{document}